\theoremstyle{plain} 
\newtheorem{theorem}{Theorem}
\theoremstyle{definition}
\newtheorem{definition}{Definition}
\newtheorem{remark}{Remark}
\newtheorem{example}{Example}
\newcommand{\RR}{\mathbb{R}} 
\newcommand{\NN}{\mathbb{N}} 
\newcommand{\ZZ}{\mathbb{Z}}
\newcommand{\DD}{\mathbb{D}} 
\newcommand{\BB}{\mathbb{B}}  
\newcommand{\const}{{\rm const}}
\newcommand{\dd}{\,{\rm d}}
\DeclareMathOperator{\inhull}{\rm hull-in}
\DeclareMathOperator{\diam}{diam}
\DeclareMathOperator{\dist}{dist} 
\DeclareMathOperator{\clos}{clos} 
\DeclareMathOperator{\Int}{int}
\DeclareMathOperator{\Meas}{Meas}
\DeclareMathOperator{\har}{har}
\DeclareMathOperator{\Har}{har}
\DeclareMathOperator{\comp}{c}
\DeclareMathOperator{\sbh}{sbh}
\DeclareMathOperator{\supp}{supp} 
\DeclareMathOperator{\loc}{loc}
\DeclareMathOperator{\Borel}{Borel}
\DeclareMathOperator{\dom}{dom}
\DeclareMathOperator{\Dom}{Dom}
\DeclareMathOperator{\conn}{conn}
\DeclareMathOperator{\Conn}{Conn}
\title{Classical Balayage of Charges and Measures}
\author{\bf Bulat N. Khabibullin\footnote{This study was financially  supported by the Russian Science Foundation (projects No.~18-01-00002.)}
 \and  \bf Enzhe Menshikova}
\begin{document}

\maketitle

\begin{abstract}
We investigate some properties of balayage of charges and measures for subclasses of subharmonic functions and their relationship to the geometry of  domain or open set in 
finite-dimensional Euclidean space  where this balayage is considered.

\textbf{MSC 2010:} 31B5, 3A05, 3C05, 31C15, 28A25  

\textbf{Keywords:} {balayage, sweeping out, potential, measure, charge,  subharmonic function, harmonic function, polar set, harmonic continuation}

\end{abstract}

\section{Introduction}\label{int}

We have are considered in the survey \cite{KhaRozKha19} various general concepts of  balayage. In this article we deal with a particular case of such balayage with respect to special  classes of  test subharmonic  functions. 
The general concept of balayage can be defined as follows. Let $F$ be a set and $(R,\leq)$ be a   (pre-)ordered set with (pre-)order relation $\leq$. A function $f\colon F\to R$
can be called a {\it  balayage} of a function $g\in F\to R$ {\it  for a subset\/} $\mathcal V \subset F$, and we write $g\preceq_{\mathcal V}  f$,  if the function $f$ majorizes the function $g$ on $\mathcal V $:
\begin{equation}\label{b0}
g(v)\leq f(v) \quad \text{for all $v\in \mathcal V$} .
\end{equation}

In this article, we use the balayage when $F$ is a class of integrals  defined by charges or positive measures on a subdomain $D$ of  finite-dimensional Euclidean space, and classes  $\mathcal V$ are  special subclasses  of subharmonic functions on $D$. In this case, relation \eqref{b0} turns into inequalities of the form 
\begin{equation}\label{b0mu}
g(v):=\int_D v \dd \vartheta\leq \int_D v \dd \mu=:f(v) \quad \text{for all $v\in \mathcal V$}, 
\end{equation}
where $\vartheta$ and  $\mu$ is a pair of charges or measures.

The main and some special properties of charges and measures are outlined in Sec. \ref{Ssec_balm}, Theorems \ref{Prtr}--\ref{Pr_pol}, Examples \ref{sbhJ}--\ref{5}.

\section{Definitions, notations and conventions}\label{Ss12}
\setcounter{equation}{0}

The reader can skip this Section \ref{Ss12}
and return to it only if necessary.

\subsection{\bf Sets, order, topology}\label{SsSets} 

As usual, $\mathbb N:=\{1,2, \dots\}$,  $\mathbb R$ and $\mathbb C$ are the sets 
of all {\it natural, real \/} and {\it complex\/} numbers, respectively; 
$\NN_0:=\{0\}\cup\NN$   is French natural series, and $\ZZ:=\NN_0\cup\NN_0$. 

For $d \in \NN$ we  denote by $\mathbb R^d$ the {\it $d$-dimensional real  Euclidean  space\/} with the standard {\it Euclidean norm\/} $|x|:=\sqrt{x_1^2+\dots+x_d^2}$ for $x=(x_1,\dots ,x_d)\in \RR^d$
and the distance function $\dist (\cdot, \cdot)$.
For  the {\it real line\/} $\RR=\RR^1$ with  {\it Euclidean norm-module\/} $|\cdot |$,   
\begin{subequations}\label{df:R}
\begin{align}
\RR_{-\infty}:=\{-\infty\}\cup \RR,\; 	\RR_{+\infty}:=\RR\cup 
\{+\infty\}, \; &|\pm\infty|:=+\infty; 
\;  \RR_{\pm\infty}:=\RR_{-\infty}\cup \RR_{+\infty}
\tag{\ref{df:R}$_\infty$}\label{df:Rr}\\
\intertext{is {\it extended real line\/} in the end topology with two ends $\pm \infty$, with   the order relation $\leq$ on $\RR$ complemented by the 
inequalities $-\infty \leq x\leq +\infty$ for $x\in \RR_{\pm\infty}$, with the {\it positive real axis}}
\RR^+:= \{x\in \RR\colon x\geq 0\}, \; \RR_{+\infty}^+:=\RR^+\cup\{+\infty\}, &\; \begin{cases}
x^+&:=\max\{0,x \},\\
 x^-&:=(-x)^+,
\end{cases}
\; \text{for $x\in \RR_{\pm\infty}$},
\tag{\ref{df:R}$^+$}\label{df:R+}
\\
S^+:=\{x\geq 0\colon x\in S \}, \quad S_*:=S\setminus \{0 \} &\quad\text{for $S\subset \RR_{\pm\infty}$}, \quad \RR_*^+:=(\RR^+)_*, 
\tag{\ref{df:R}$_*^+$}\label{df:R*}\\
x\cdot (\pm\infty):=\pm\infty=:(-x)\cdot (\mp\infty)& \quad \text{for $x\in \RR_*^+\cup (+\infty)$}, 
\tag{\ref{df:R}$_\pm$}\label{{infty}+}\\
\frac{x}{\pm\infty}:=0\quad\text{for $x\in  \RR$},&\quad  \text{but $0\cdot (\pm\infty):=0$}
\tag{\ref{df:R}$_0$}\label{{infty}0}
\end{align}
\end{subequations} 
unless otherwise specified. An open connected (sub-)set of $\RR_{\pm\infty}$  is a {\it  (sub-)interval}  of $\RR_{\pm\infty}$.
The {\it  Alexandroff\/} one-point {\it compactification\/} of $\mathbb R^d$ is denoted by $\mathbb R^d_{\infty}:=\mathbb R^d \cup \{\infty\}$.

The same symbol $0$ is used, depending on the context, to denote the number zero, the origin, zero vector, zero function, zero measure, etc. The {\it positiveness\/} is everywhere understood as $\geq  0$ according to the context.
Given $x\in \RR^d$ and\footnote{A reference mark over a symbol of (in)equality, inclusion, or more general binary relation, etc. means that this relation is somehow related to this reference.}  $r\overset{\eqref{df:R+}}{\in} \RR_{+\infty}^+$, we set 
\begin{subequations}\label{B}
\begin{align}
B(x,r):=\{x'\in \RR^d \colon |x'-x|<r\},&
\quad \overline{B}(x,r):=\{x'\in \RR^d \colon |x'-x|\leq r\},
\tag{\ref{B}B}\label{{B}B}
\\
\quad B(\infty,r):=\{x\in \RR_{\infty}^d \colon |x|>1/r\},&\quad 
\overline B(\infty,r):=\{x\in \RR_{\infty}^d \colon |x|\geq 1/r\},
\tag{\ref{B}$_\infty$}\label{{B}infty}
\\
B(r):=B(0,r),\quad \BB:=B(0,1),& \quad \overline{B}(r):=\overline{B}(0,r),
\quad \overline \BB:=\overline B(0,1).
\tag{\ref{B}$_1$}\label{{B}1}
\\
B_{\circ}(x,r):=B(x,r)\setminus \{x\} ,&\quad 
 \overline{B}_{\circ}(x,r):=\overline{B}(x,r)\setminus \{x\}.
\tag{\ref{B}$_\circ$}\label{Bo}
\end{align}
\end{subequations} 
Thus, the basis of open (respectively closed) neighborhood of the point $x \in \RR_{\infty}^d$ is {\it open\/} (respectively {\it closed\/}) {\it balls\/} 
$B(x,r)$ (respectively $\overline B(x,r)$) centered at  $x$ with radius $r>0$.

Given a subset $S$ of $\RR^d_{\infty}$, the \textit{closure\/} 
$\clos S$, the\textit{ interior\/} $\Int S$  and the \textit{boundary\/} $\partial S$ will always be taken relative $\RR^d_{\infty}$. For $S'\subset S\subset \RR^d_{\infty}$ we write  
$S'\Subset S$ if $\clos S'\subset \Int S$.   
An open connected (sub-)set of $\RR^d_{\infty}$  is a {\it  (sub-)domain}  of $\RR^d_{\infty}$. 

\subsection{\bf Functions.}\label{Functions} Let $X,Y$ are sets. We denote by $Y^X$ the set of all functions  $f\colon X\to Y$. The value $f(x) \in Y$ of an arbitrary function $f\in X^Y$ is not necessarily defined for all $x \in X$. The restriction of a function f to $S \subset X$ is denoted by $f\bigm|_{S}$. We set 
\begin{equation}\label{RX}
\RR_{-\infty}^X\overset{\eqref{df:Rr}}{:=}(\RR_{-\infty})^X, \quad
\RR_{+\infty}^X\overset{\eqref{df:Rr}}{:=}(\RR_{+\infty})^X,\quad
\RR_{\pm\infty}^X\overset{\eqref{df:Rr}}{:=}(\RR_{\pm\infty})^X.  
\end{equation}
A function $f\in \RR_{\pm\infty}^X$ is said to be {\it extended numerical.\/}  
For extended numerical functions $f$, we set 
\begin{equation}\label{dom}
\begin{split}
\Dom_{-\infty}:=f^{-1}(\RR_{-\infty})\subset X,& 
\quad  \Dom_{+\infty} f:=f^{-1}(\RR_{+\infty})\subset X, \\
  \Dom f:=f^{-1}(\RR_{\pm\infty})&=
\Dom_{-\infty} f\bigcup \Dom_{+\infty}f\subset X,\\
\dom f:=f^{-1}(\RR)=&\Dom_{-\infty} f\bigcap \Dom_{+\infty}f\subset X, 
\end{split}
\end{equation} 
For $f,g\in \RR_{\pm\infty}^X$  we write $f= g$ if  
$\Dom f=\Dom g=:D$ and $f(x)=g(x)$ for all $x\in D$, 
and we write $f\leq g$ if $f(x)\leq g(x)$ for all $x\in D$.
For $f\in \RR_{\pm\infty}^X$, $g\in \RR_{\pm\infty}^Y$  and a set $S$, we write 
``$f = g$ {\it on\/} $S$\,'' or  ``$f \leq g$ {\it on\/} $S$\,'' if 
 $f\bigm|_{S\cap D}= g\bigm|_{S\cap D}$ or $f\bigm|_{S\cap D}\leq g\bigm|_{S\cap D}$ respectively.

For $f\in F\subset \RR_{\pm\infty}^X $, we set $f^+\colon x\mapsto \max \{0,f(x)\}$,
$x\in \Dom f$,  $F^+:=\{f\geq 0 \colon f\in F\}$. So, $f$  is \textit{positive\/} on $X$ if $f=f^+$, and  we write ``$f\geq 0$ {\it on\/} $X$''.

 For topological space $X$, $C(X)\subset \RR^X$  is the vector space over $\RR$
of all continuous functions. We denote the function
identically equal to resp. $-\infty$ or $+\infty$ on a set 
by the symbols $\boldsymbol{-\infty}$ or $\boldsymbol{+\infty}$.

For an open set  $O\subset \RR^d_{\infty}$,
 we denote  by $\har (O)$ and  $\sbh (O)$  the classes  of all {\it harmonic\/} (locally affine for m = 1) and    {\it subharmonic\/} (locally convex for $m = 1$) functions on $O$, respectively.  The class $\sbh ( O)$  contains the {\it minus-infinity function\/} 
$-\infty$; 
\begin{equation}\label{sbh}
 \sbh_*(  O):=\sbh\,(  O)\setminus 
\{\boldsymbol{-\infty}\}, \quad 
	\sbh^+(  O):=( \sbh (  O))^+.
\end{equation}
 If $o\notin O\ni \infty$, then we can to use the  \textit{inversion\/} in  the 
sphere $\partial B(o,1)$ centered at $o \in  \RR^d$:
\begin{subequations}\label{stK}
\begin{align}
\star_{o} \colon x\longmapsto x^{\star_{o}}&:= \begin{cases}
o\quad&\text{for $x=\infty$},\\
o+\frac{1}{|x-o|^2}\,(x-o)\quad&\text{for $x\neq o,\infty$},\\
\infty\quad&\text{for $x=o$},
\end{cases}
\qquad \star:=\star_0=:\star_{\infty}
\tag{\ref{stK}$\star$}\label{stK*}
\\
\intertext{together with the  {\it Kelvin transform\/} \cite[Ch. 2, 6; Ch. 9]{Helms}
}
u^{\star_o}(x^{\star_o})&=|x-o|^{d-2}u(x), \quad x ^{\star_o}\in   
O^{\star_o}:=\{x^{\star_o}\colon x\in
  O\}, 
\tag{\ref{stK}u}\label{stKu}
\\
&\Bigl(u\in \sbh (O)\Bigr)\Longleftrightarrow  \Bigl(u^{\star_o}\in \sbh (O^{\star_o})\Bigr).
\tag{\ref{stK}s}\label{stKs}
\end{align}
\end{subequations}

For a  subset $S\subset \RR_{\infty}^d$,  the classes $\Har (S)$, $\sbh(S)$, and $C^k(S)$ with $k\in \NN\cup \{\infty\}$ consist of the restrictions to $S$ of {\it harmonic, subharmonic, {\rm  and}
  k times continuously differentiable functions\/} in some (in general, its own for each function) open set $O\subset \RR_{\infty}^d$ containing $S$.   A class 
$\sbh_*(S)$ is  defined like previous class \eqref{sbh},
\begin{equation}\label{+S}
\sbh^+(S)\overset{\eqref{sbh}}{:=}\bigl\{u\in \sbh(S)\colon u\geq 0 \text{ on } S\bigr\}.
\end{equation}

By $\const_{a_1,a_2,\dots}\in \RR$ we denote constants, and constant functions, in general, depend on $a_1,a_2,\dots$ and, unless otherwise specified, only on them,
where the dependence on dimension $d$ of $\RR_{\infty}^d$ will be not specified and not discussed; $\const^+_{\dots}\geq 0$.  

\subsection{\bf Measures and charges.} 

Let $\Borel (S)$ be the class of all Borel subsets in $S\in \Borel(\RR_{\infty}^d)$. We denote by $\Meas(S)$  the class of all Borel signed measures, or, \textit{charges\/} on $S\in{\Borel} (\RR_{\infty}^d)$;   $\Meas_{\comp}(S)$ is the class of charges $\mu \in \Meas(S)$ with a compact support $\supp \mu \Subset S$; 

\begin{subequations}\label{m}
\begin{align}
\Meas^+(S)&:=\{ \mu\in  \Meas (S)\colon \mu\geq 0\},
 \; \Meas_{\comp}^+(S):= \Meas_{\comp} (S)\cap \Meas^+(S);
\tag{\ref{m}$^+$}\label{m+}\\ 
  \Meas^{1+}(S)&:=\{\mu \in \Meas^+(S)\colon \mu(S)=1 \} \text{, \;  \it  probability measures}.
\tag{\ref{m}$^1$}\label{m1}
\end{align}
\end{subequations}
For a charge $\mu \in \Meas(S)$, we let
$\mu^+$, $\mu^-:=(-\mu)^+$ and $|\mu| := \mu^+ +\mu^-$ respectively denote its {\it upper, lower,\/} and {\it total variations.}  So, $\delta_x \in \Meas_{\comp}^{1+} (S)$
is the {\it Dirac measure\/} at a point $x \in S$, i.e., $\supp \delta_x = \{x\}$, $\delta_x (\{x\}) = 1$. We denote by  $\mu\bigm|_{S'}$
the restriction of $\mu$ to  $S'\in {\Borel} (\RR_{\infty}^d)$.

If the Kelvin transform \eqref{stK} translates the subharmonic function $u$ into another function $u^{\star}_o$ \eqref{stKu}, then its Riesz measure $\upsilon$ is transformed  common use  image under its own mapping-inversion of type $1$ or $2$. These rules are described in detail in L. Schwartz's  monograph \cite[Vol.~I,Ch.IV, \S~6]{Schwartz} and we do not dwell on them here, although here interesting questions arise, for example, for the Bernstein\,--\,Paley\,--\,Wiener\,--Mary Cartwright classes of entire functions \cite{Havin}, \cite{Koosis96}, \cite{BaiTalKha}, \cite{KhaTalKha14} etc.

Given $S\in{\Borel}(\RR_{\infty}^d)$ and $\mu\in \Meas (S)$,  the class $L^1_{\loc} (S, \mu)$ consists of all extended numerical locally integrable functions with respect to the measure $\mu$ on $S$; $L^1_{\loc} (S):=L^1_{\loc} (S, \lambda_d)$.
For $ L\subset L^1_{\loc}(S,\mu )$, we define a subclass
\begin{equation}\label{MLl}
L \dd \mu:=\bigl\{\nu \in  \Meas (S)\colon 
\text{\it  there exists $g\in  L$ such that\/   $\dd \nu=g \dd \mu$} \bigr\}
\end{equation} 
 of the class of all absolutely continuous charges with respect to $\mu$. 
For $\mu \in \Meas(S)$, we set  
\begin{equation}\label{mB}
\mu(x,r):=\mu\bigl(B(x,r)\bigr) \text{ if $B(x,r)\overset{\eqref{B}}{\subset} S$}.
\end{equation}
Let ${\bigtriangleup}$  be the  the {\it Laplace operator\/}  acting in the sense of the
theory of distributions, $\Gamma$ be the \textit{gamma function,}
\begin{equation}\label{sd-1}
s_{d-1}:=\frac{2\pi^{d/2}}{\Gamma (d/2)}
\end{equation}
be the \textit{surface area\/} of the \textit{$(d-1)$-dimensional unit sphere\/} $\partial \BB$ embedded in $\RR^d$.
For function $u\in \sbh_*(O)$, the {\it  Riesz measure of\/} $u$ is a  Borel 
(or Radon \cite[A.3]{R}) \textit{ positive measure }
\begin{equation}\label{df:cm}
\varDelta_u:= c_d {\bigtriangleup}  u\in \Meas^+(  O),  \quad 
c_d\overset{\eqref{sd-1}}{:=}\frac{1}{s_{d-1}(1+( d-3)^+)}=\frac{\Gamma(d/2)}{2\pi^{d/2}\max \{1, d-2\bigr\}}.
\end{equation}
In particular,   $\Delta_u(S)<+\infty$ for each subset $S\Subset   O$.
By definition, $\Delta_{\boldsymbol{-\infty}}(S):=+\infty$ for all $S\subset 
  O$. 

We use different variants of \textit{outer Hausdorff $p$-measure} $\varkappa_p$ with $p\in \NN_0$:
\begin{subequations}\label{df:sp}
  \begin{align}
\varkappa_p(S)&:=b_p \lim_{0<r\to 0} \inf \biggl\{\sum_{j\in \NN}r_j^p\,\colon  
  S\subset \bigcup_{j\in \NN}B(x_j,r_j),  0\leq r_j<r\biggr\}, 
\tag{\ref{df:sp}H}\label{df:spH}
 \\ 
b_p &\overset{\eqref{df:cm}}{:=}
\begin{cases} 0\quad&\text{if  $p=0$,}\\
\dfrac{s_{p-1}}{p}\quad&\text{ if $p\in \NN,$}
\end{cases}  \quad \text{is the {\it volume of the unit ball $\BB$ in  $\RR^p$}}.
\tag{\ref{df:sp}b}\label{df:spb}
\end{align}
\end{subequations}
Thus, for $p=0$, for any $S\subset \RR^d$, its Hausdorff $0$-measure $\varkappa_0(S)$ is to the cardinality $\#S$ of $S$, for $p=d$ we see that $\varkappa_d\overset{\eqref{df:spH}}{=:}\lambda_d$ is  
 the {\it Lebesgue measure\/} to Borel proper subsets
$S \subset \RR_{\infty}^d$, where, if $\infty \in S$, we preliminary use the inversion\eqref {stKu}, and $\sigma_{d-1}:=\varkappa_{d-1}\bigm|_{\partial \BB}$ is the $(d-1)$-dimensional  surface measure of area on the unit sphere $\partial \BB$ in the usual sense. 

\subsection{\bf Topological concepts. Inward-filled hull of set in  open set}\label{hullin} 

Let $O$ be a topological space, $S\subset O$, $x\in O$.  
We denote by $\Conn_O S$ and $\conn_O (S,x) \in \Conn_O S$ 
a set of all connected components of $S$ and  its connected component containing  $x$. We write $\clos_O S$, $\Int_O S$,  and $\partial_O S$ for  the \textit{closure,\/} the\textit{ interior,\/}  and the \textit{boundary\/} of $S$ in  $O$. 
The set $S$ is \textit{$O$-precompact\/} if $\clos_O S$ is a compact subset of $O$, and we write $S\Subset O$.  
\begin{definition}
\label{df:hole}
An arbitrary  $O$-precompact  connected component of $O\setminus S$ is called  a \textit{hole\/} in  $S$ with respect to\/ $O$. 
The union of a subset $K\subset O$ with all holes in it will be called an \textit{inward-filled hull\/} of this set $K$ with respect to $O$ and is denoted further as 
\begin{equation}\label{inhull}
\inhull_O K:=K\bigcup \Bigl(\bigcup \{C\in \Conn_O (O\setminus K) \colon C\Subset O
\}\Bigr). 
\end{equation}
Denote by $O_{\infty}$  the {\it  Alexandroff one-point compactification of\/} $O$ 
with underlying set $O \sqcup \{\infty\}$, where $\sqcup$ is the \textit{disjoint union\/} of $O$ with a single point $\infty \notin O$. If this space $O$ is a topological subspace of some ambient topological space $T\supset O$, then this point $\infty$  can be identified with the boundary $\partial O\subset T$ , considered as a single point $\{\partial O\}$.
\end{definition} 
Throughout this article, we use these topological concepts only in  cases when  $O$ is an {\it open non-empty proper Greenian open set} \cite[Ch.5, 2]{Helms} of $\RR_{\infty}^d=:T$, i.\,e.,   
\begin{subequations}\label{ODj}
\begin{align}
\varnothing \neq O=\Int_{\RR_{\infty}^d}O= \bigsqcup_{j\in N_O} D_j\neq \RR_{\infty}^d, \quad j\in N_O\subset \NN, \quad D_j=\conn_{\RR_{\infty}^d}(O,x_j), 
\tag{\ref{ODj}O}\label{{ODj}O}
\\
\intertext{where points  $x_j$  lie in {\it different connected components\/} $D_j$ of $O\subset \RR_{\infty}^d$;}
 \varnothing \neq D\neq \RR_{\infty}^d \quad \text{is an open connected subset, i.\,e., 
a {\it domain}}.
\tag{\ref{ODj}D}\label{{ODj}D}
\end{align}
\end{subequations}
For an open set $O$ from \eqref{{ODj}O}, we often use statements that are proved in our references only for domains $D$ from \eqref{{ODj}D}. This is acceptable since all such cases concern only to individual domains-components $D_j$. So, if $S\Subset O$, then $S$ meets only finite many components $D_j$. In addition, we give proofs of our statements only for cases $O,D \subset \RR^d$. If we have 
$o\notin D_j=D\ni \infty$, then we can to use the  \textit{inversion\/} relative to the 
sphere $\partial B(o,1)$ centered at $o \in  \RR^d$ as in \eqref{stK}.
\begin{theorem}[{\rm \cite[6.3]{Gardiner}, \cite{Gauther_B}}]\label{KOc}
Let  $K$ be a compact  set in an open set  $O \subset \RR^d$.  Then 
\begin{enumerate}[{\rm (i)}]
\item\label{Ki} $\inhull_{O}  K$ is a compact subset in $O$;
\item\label{Kii} the set\/ $O_{\infty} \setminus \inhull_{O}  K$ is connected and locally connected subset in\/ $O_{\infty}$; 
\item\label{Kiii}   the inward-filled hull of $K$ with respect to $O$ coincides with the complement in $O_{\infty}$ of  connected component of $O_{\infty}\setminus K$ containing the point $\infty$, i.\,e., 
\begin{equation*}
\inhull_{O}  K=O_{\infty}\setminus \conn_{O_{\infty}\setminus K}(\infty);
\end{equation*}
\item\label{Kiiv} if $O'\subset \RR_{\infty}^d$ is an open subset and 
 $O\subset  O'$ then 
 $\inhull_{O}  K\subset \inhull_{O'}  K$;
\item\label{Kiv} $\RR^d\setminus \inhull_{O}  K$ has only finitely many components, i.\,e., $$\#\Conn_{\RR^d_{\infty}}(\RR^d\setminus \inhull_{O}  K)<\infty.$$
\end{enumerate}
\end{theorem}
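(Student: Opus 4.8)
The plan is to describe $\inhull_O K$ explicitly through the connected components of $\RR^d\setminus K$ and to read off each assertion from that description, with (iii) as the organizing principle. The first thing I would record is a dictionary between holes and complementary components: if $C$ is a hole in $K$ with respect to $O$, i.e.\ an $O$-precompact component of $O\setminus K$, then $\clos_O C$ is compact and hence $\partial C\subset K$, so $C$ is in fact a bounded connected component of the full complement $\RR^d\setminus K$ that happens to lie in $O$; conversely every bounded component $\widetilde C$ of $\RR^d\setminus K$ with $\widetilde C\subset O$ is such a hole, since $\clos_{\RR^d}\widetilde C=\widetilde C\cup\partial\widetilde C\subset \widetilde C\cup K\subset O$. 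This gives the working formula
\[
\inhull_O K=K\cup\bigcup\bigl\{\widetilde C:\ \widetilde C\ \text{a bounded component of}\ \RR^d\setminus K,\ \widetilde C\subset O\bigr\},
\]
so that $\RR^d\setminus\inhull_O K$ is exactly the union of the remaining (``unfilled'') components of $\RR^d\setminus K$.

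For (iii) I would pass to $O_\infty$ and analyse $U:=\conn_{O_\infty\setminus K}(\infty)$. Using the standard fact that for $C\subset O$ one has $\infty\notin\clos_{O_\infty}C$ iff $\clos_O C$ is compact, I would show that a component $C$ of $O\setminus K$ lies in $U$ precisely when it is not $O$-precompact: if $C$ is not precompact then $\infty\in\clos_{O_\infty}C$, so $C\cup\{\infty\}$ is connected and sits in $U$; conversely, if $C\subset U$ were precompact it would be clopen in $U$, forcing $C=U\ni\infty$, impossible since $C\subset O$. Hence $U\setminus\{\infty\}$ is the union of the non-holes and $O_\infty\setminus U=K\cup(\text{holes})=\inhull_O K$, which is (iii). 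Assertions (i) and (ii) then follow quickly: the working formula shows $\inhull_O K\subset B(0,R)$ whenever $K\subset B(0,R)$ and that its complement in $\RR^d$ is open, so $\inhull_O K$ is a closed bounded subset of $\RR^d$ contained in $O$, hence compact in $O$, proving (i); consequently $\inhull_O K$ is closed in the Hausdorff space $O_\infty$, so $U=O_\infty\setminus\inhull_O K$ is open and connected, which is half of (ii).

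Local connectedness in (ii) I would prove pointwise. At points of $U\cap O$ it is inherited from the local connectedness of the open set $O\subset\RR^d$. At $\infty$ I would bootstrap from (i) and (iii): given any neighbourhood $O_\infty\setminus C$ of $\infty$ with $C$ compact and $C\supseteq\inhull_O K$, the set $O_\infty\setminus\inhull_O C$ is open by (i), connected and containing $\infty$ by (iii), and contained in $O_\infty\setminus C\subset U$; this produces arbitrarily small connected neighbourhoods of $\infty$ inside $U$. For monotonicity (iv) the dictionary does all the work: an $O$-hole is a bounded component $\widetilde C$ of $\RR^d\setminus K$ with $\widetilde C\subset O\subset O'$, and since $\partial\widetilde C\subset K\subset O'$ it is equally an $O'$-hole, whence $\inhull_O K\subset\inhull_{O'}K$ (the case $\infty\in O'\subset\RR^d_\infty$ being reduced to this one by the inversion \eqref{stK}).

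The quantitative heart is (v), and this is where I expect the only real work. By the working formula the components of $\RR^d\setminus\inhull_O K$ are the unfilled components of $\RR^d\setminus K$: the single unbounded one, together with the bounded components $\widetilde C$ that fail to lie in $O$, i.e.\ meet $\RR^d\setminus O$. Here I would use that $K$ is a compact subset of the open set $O$, so $\delta:=\dist(K,\RR^d\setminus O)>0$ and $K\subset B(0,R)$ for some $R$, whence every bounded component lies in $B(0,R)$. If such a $\widetilde C$ meets $\RR^d\setminus O$ at a point $q$, then $B(q,\delta)$ misses $K$, so $B(q,\delta)\subset\widetilde C$. The balls $B(q,\delta)$ attached to distinct unfilled bounded components are pairwise disjoint and all contained in $B(0,R+\delta)$, so a volume (packing) bound limits their number; adding the unbounded component gives $\#\Conn_{\RR^d_\infty}(\RR^d\setminus\inhull_O K)<\infty$. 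The main obstacle is precisely this packing step: it is the one place where mere compactness of $K$ does not suffice and one must exploit the uniform separation $\dist(K,\RR^d\setminus O)>0$ to turn ``infinitely many unfilled components'' into ``infinitely many disjoint $\delta$-balls in a fixed ball'', a contradiction.
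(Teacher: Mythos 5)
Your proof is correct, but there is nothing in the paper to measure it against: Theorem \ref{KOc} is stated as an imported result, attributed to \cite[6.3]{Gardiner} and \cite{Gauther_B}, and the paper supplies no proof of its own (it only invokes the theorem later, in the proofs of Theorems \ref{Pr:munuh} and \ref{pr:4}). So your argument stands as a self-contained substitute, and it holds up. The dictionary between holes and bounded components of $\RR^d\setminus K$ contained in $O$ is established correctly (the key points being $\partial C\subset K$ and that such a $C$ is clopen in $\RR^d\setminus K$, hence a full component of it), item (iii) follows from your clean characterization of which components of $O\setminus K$ are absorbed into $\conn_{O_\infty\setminus K}(\infty)$, and (i), (ii), (iv) then come out as you say. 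The bootstrap for local connectedness at $\infty$ --- applying (i) and (iii) to compact sets $C\supset\inhull_O K$, which are cofinal among complements of neighborhoods of $\infty$, together with the monotonicity $\inhull_O K\subset\inhull_O C$ --- is exactly the right device. Your packing argument for (v) is the most interesting divergence: each unfilled bounded component meets $\RR^d\setminus O$ and hence contains a ball of radius $\delta=\dist(K,\RR^d\setminus O)>0$, and pairwise disjoint $\delta$-balls inside $B(0,R+\delta)$ are finite in number; this is elementary and even quantitative, yielding a bound of order $\bigl((R+\delta)/\delta\bigr)^d$ on the number of components, where the cited sources content themselves with bare finiteness.

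Three edge cases deserve a line each. In (v), ``the single unbounded component'' fails for $d=1$, where $\RR\setminus K$ has two unbounded components; finiteness is of course unaffected, but the paper does allow $d=1$ (subharmonic $=$ locally convex there). Also in (v), if $O=\RR^d$ the quantity $\dist(K,\RR^d\setminus O)$ is undefined; in that case every bounded component of $\RR^d\setminus K$ is filled and the complement of the hull consists of the unbounded component(s) alone, so the claim is trivial --- say so. Finally, in (iv) the inversion reduction requires a point $o\in\RR^d_\infty\setminus O'$ and thus silently assumes $O'\neq\RR^d_\infty$; the excluded case is degenerate (in the compact space $\RR^d_\infty$ every component of $O'\setminus K$ is precompact, so the inclusion holds trivially) but should be mentioned. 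None of these affects the substance of your argument.
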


\section{Properties of  balayage of charges and measures}\label{Ssec_balm}
\setcounter{equation}{0}

In this section \ref{Ssec_balm} we discuss conventional classical balayage that is particular case of  \eqref{b0}.

\begin{definition}\label{df:1} Let $\vartheta, \mu  \in \Meas(S)$, $S\subset \Borel (\RR^d_{\infty})$.   Let $H\subset \RR_{\pm\infty}^{S}$ be a class of Borel-me\-a\-s\-u\-r\-a\-ble functions on $S$. 
 Let us assume that the integrals $\int h \dd{\vartheta}$ and  $\int h \dd{\mu}$ are well defined with values in $\RR_{\pm\infty}$ for each function $h\in H$. We write ${\vartheta} \preceq_H \mu$ and say that the charge  $\mu$ is a {\it balayage,\/} or, sweeping (out), of the charge ${\vartheta}$ {\it for\/} $H$, or, briefly, $\mu$ is a  $H$-balayage of $\vartheta$,   if 
\begin{equation}\label{balnumu}
\int h \dd {\vartheta} \overset{\eqref{b0mu}}{\leq} \int h\dd \mu \quad\text{\it for all\/ $h\in H$.}
\end{equation} 
\end{definition}

 In this article, we consider only balayage  for 
\begin{equation}\label{hHS}
\boxed{H\subset \sbh(S)\subset \RR_{-\infty}^S}.
\end{equation}
In this case,  the integrals from \eqref{balnumu} with values in $\RR_{-\infty}$ are well  defined for all measures 
${\vartheta}, \mu \in \Meas_{\comp}^+(S)$,
and, with values in $\RR$, for all  absolutely continuous
(with respect to $\lambda_d$) charges 
${\vartheta},\mu\overset{\eqref{MLl}}{\in} L_{\loc}^1(S)\dd \lambda_d$ etc.

\begin{theorem}\label{Prtr}  Let  $O\subset \RR^d$ be an open set, $\mu \in \Meas(O)$ be a  $H$-balayage of ${\vartheta}\in  \Meas(O)$, $O'\subset \RR^d$ be an open set, and $H'\subset \RR_{\pm\infty}^{O'}$. 
\begin{enumerate}[{\rm 1.}]
\item\label{b1} If  $1\in H$, then ${\vartheta} (O)\leq \mu(O)$. 
\item\label{b2}  If $\pm 1\in H$, then ${\vartheta} (O)= \mu(O)$.
\item\label{b3} If $H'\subset H$, then  $\mu$  is a $H'$-balayage of ${\vartheta}$.
 \item If  $O'\subset O$  and $supp \vartheta\cup \supp \mu\subset O'$, then $\mu\bigm|_{O'}$  is a balayage of ${\vartheta}\bigm|_{O'}$ for  $H\bigm|_{O'}$.
 \end{enumerate}
\end{theorem}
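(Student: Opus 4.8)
The plan is to obtain all four items directly from the defining balayage inequality \eqref{balnumu}, since each reduces either to a suitable choice of test function or to a bookkeeping identity for the measures; no potential theory beyond the standing assumption \eqref{hHS} is needed.

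Items \ref{b1}--\ref{b3} are immediate. For item \ref{b1} I would substitute the constant $h=1\in H$ into \eqref{balnumu}; as $\int 1\dd{\vartheta}=\vartheta(O)$ and $\int 1\dd\mu=\mu(O)$, this reads $\vartheta(O)\leq\mu(O)$. For item \ref{b2} I would additionally substitute $h=-1\in H$, obtaining $-\vartheta(O)\leq-\mu(O)$, i.e. $\mu(O)\leq\vartheta(O)$; together with item \ref{b1} this forces $\vartheta(O)=\mu(O)$. For item \ref{b3}, the inequality \eqref{balnumu} holds for every $h\in H$ and hence, a fortiori, for every $h\in H'\subset H$, so $\mu$ is an $H'$-balayage of $\vartheta$ by Definition \ref{df:1}.

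The only item needing an actual argument is the fourth. Given $O'\subset O$ with $\supp\vartheta\cup\supp\mu\subset O'$, I would fix an arbitrary $h'\in H\bigm|_{O'}$ and write $h'=h\bigm|_{O'}$ for some $h\in H\subset\sbh(O)$; then $h'\in\sbh(O')$, because the restriction of a subharmonic function to the smaller open set $O'$ is subharmonic, so $h'$ is an admissible test function for the pair $\vartheta\bigm|_{O'},\mu\bigm|_{O'}$. The decisive step is the support identity
\begin{equation*}
\int_{O'}h'\dd\bigl(\vartheta\bigm|_{O'}\bigr)=\int_{O}h\dd{\vartheta},
\qquad
\int_{O'}h'\dd\bigl(\mu\bigm|_{O'}\bigr)=\int_{O}h\dd\mu ,
\end{equation*}
which holds because $\vartheta$ and $\mu$ put no mass on $O\setminus O'$ (their supports lie in $O'$), so the contribution of $O\setminus O'$ to each integral over $O$ vanishes even where $h=-\infty$, by the convention $0\cdot(\pm\infty):=0$. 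Feeding the $H$-balayage inequality $\int_O h\dd{\vartheta}\leq\int_O h\dd\mu$ through these identities yields $\int_{O'}h'\dd(\vartheta\bigm|_{O'})\leq\int_{O'}h'\dd(\mu\bigm|_{O'})$ for every $h'\in H\bigm|_{O'}$, which is exactly the claim.

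The single point to be careful about throughout is that every integral above be well defined with a value in $\RR_{-\infty}$, so that the substitutions and the support identity are legitimate; this is guaranteed by the hypotheses of Definition \ref{df:1} together with \eqref{hHS}, under which the integrals are well defined for compactly supported positive measures (and take values in $\RR$ for absolutely continuous charges). I expect this well-definedness check, rather than any of the algebraic steps, to be the only mild obstacle.
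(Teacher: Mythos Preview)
Your proof is correct and is precisely the straightforward unpacking of Definition~\ref{df:1} that the paper has in mind; the paper itself dispenses with all four items in a single line (``All statements of Theorem~\ref{Prtr} are obvious''), so your argument is the same approach written out in full.
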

All statements of Theorem \ref{Prtr} are obvious.

\begin{remark} Balayage of  charges and measures with a non-compact support is also occur frequently and are used  in Analysis. So, a bounded domain $D\subset \RR^d$ is called a {\it quadrature domain\/} (for harmonic functions)  if there is a charge $\mu \in \Meas_{\comp} (D)$ such that the restriction $\lambda_d\bigm|_D$  is a balayage of $\mu$ for the class $\har (D)\cap L^1(D)$. In connection with the quadrature domains, see very informative overview  \cite[3]{quad} and bibliography in it.
\end{remark}

\begin{theorem}\label{pr:diff} If $\mu \in \Meas^+_{\comp}(O)$   is a balayage of $\vartheta\in \Meas_{\comp}^+(O)$ for $\bigl(\sbh(O)\cap C^{\infty}(O)\bigr)$, then  $\mu$   is a balayage of $\vartheta$ for $\sbh(O)$. 
 \end{theorem}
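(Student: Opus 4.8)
The plan is to exploit the standard fact that every subharmonic function is the pointwise decreasing limit of its mollifications, which are smooth and subharmonic, and then to pass to the limit under the integral sign by monotone convergence, using the positivity of the measures. So I would fix an arbitrary $u\in\sbh(O)$ (the case $u\equiv\boldsymbol{-\infty}$ being trivial) and aim to establish the single inequality $\int u\dd\vartheta\le\int u\dd\mu$, which by Definition~\ref{df:1} is exactly what it means for $\mu$ to be a balayage of $\vartheta$ for $\sbh(O)$.

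First I would put $K:=\supp\vartheta\cup\supp\mu\Subset O$ and choose open neighbourhoods $K\Subset V\Subset V'\Subset O$. For a radial mollifier $\varphi_t$ and small $t>0$ the convolutions $u_t:=u*\varphi_t$ are defined, of class $C^\infty$, and subharmonic on a neighbourhood of $\clos V'$; moreover $u_t$ increases with $t$ and $u_t\downarrow u$ pointwise as $t\downarrow0$. In particular, for each fixed $t$ the function $u_t$ is finite on the compact set $\clos V'$, while on $K$ the whole family is dominated by $u_{t_0}$ for any fixed $t_0$, which is integrable against the finite measures $\vartheta,\mu$.

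The difficulty is that $u_t$ is only defined near $K$, whereas the hypothesis supplies the balayage inequality solely for functions of $\sbh(O)\cap C^\infty(O)$, i.e. smooth and subharmonic on all of $O$; the mollification naturally fails in the collar within distance $t$ of $\partial O$. To bridge this gap I would, for each fixed $t$, glue $u_t$ to a global smooth subharmonic barrier by means of the regularized maximum $M_\eta$ (smooth, convex, nondecreasing in each variable, and equal to the larger argument once the two arguments differ by at least $\eta$). After the inversion \eqref{stKu} reduces matters to $O\subset\RR^d$, pick $x_0,r$ with $V\Subset B(x_0,r)\Subset V'$ and set $g=g_t:=A\,\omega-B$, where $\omega:=M_1\bigl(0,|x-x_0|^2-r^2\bigr)\in\sbh(O)\cap C^\infty(O)$ is a smoothed $\max\{0,|x-x_0|^2-r^2\}$; choosing $B$ large makes $g\le\inf_{\clos V}u_t-1$ on $\clos V$, and then choosing $A$ large makes $g\ge\sup_{V'\setminus V}u_t+1$ on a collar inside $V'$ adjacent to $\partial V'$. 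Defining $\tilde u_t:=M_1(u_t,g)$ on $V'$ and $\tilde u_t:=g$ on $O\setminus V'$ then produces a function of $\sbh(O)\cap C^\infty(O)$ that coincides with $u_t$ on $V\supset K$ and with $g$ near $\partial V'$, so the two pieces match smoothly. This gluing — in particular exhibiting a global smooth subharmonic $g$ that is small on $V$ yet dominates $u_t$ near $\partial V'$ — is where the real work lies; everything else is formal.

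Finally I would apply the hypothesis to $\tilde u_t$, obtaining $\int\tilde u_t\dd\vartheta\le\int\tilde u_t\dd\mu$, and since $\tilde u_t=u_t$ on $K$, which carries both $\vartheta$ and $\mu$, this reads $\int u_t\dd\vartheta\le\int u_t\dd\mu$. Letting $t\downarrow0$ and invoking monotone convergence — legitimate because $u_t\downarrow u$ on $K$, the family is bounded above there by an integrable function, and $\vartheta,\mu\in\Meas_{\comp}^+(O)$ — yields $\int u\dd\vartheta\le\int u\dd\mu$, as required. The localization statement of Theorem~\ref{Prtr} may be used at the outset to replace $O$ by any convenient open set containing $K$, but it is not essential to the argument.
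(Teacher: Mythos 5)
Your proposal is correct in substance but takes a genuinely different route from the paper. The paper disposes of Theorem~\ref{pr:diff} in a single line, citing the Approximation Theorem of \cite[Ch.~4, 10]{Doob}, whereas you give a self-contained constructive argument: mollification $u_t\downarrow u$, a regularized-maximum gluing producing $\tilde u_t\in \sbh(O)\cap C^{\infty}(O)$ with $\tilde u_t=u_t$ on a neighbourhood $V$ of $K:=\supp\vartheta\cup\supp\mu$, application of the hypothesis to $\tilde u_t$, and monotone convergence, which is exactly where the positivity assumptions $\vartheta,\mu\in\Meas^+_{\comp}(O)$ enter. You correctly identify the real issue --- the hypothesis concerns functions smooth and subharmonic on \emph{all} of $O$, while mollification only works away from $\partial O$ --- and your global barrier $g=A\omega-B$ with $\omega=M_\eta\bigl(0,|x-x_0|^2-r^2\bigr)$ is smooth and subharmonic on the whole of $\RR^d$, so nothing is needed near $\partial O$; this makes explicit a passage that the paper delegates entirely to the external reference. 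What each approach buys: the paper's is short but opaque, resting on a nontrivial theorem of Doob; yours is longer but elementary, local, and in the same constructive spirit as the paper's own Remark~\ref{remgl}, where smoothing is instead performed on the \emph{measures} via convolution with the measures $\alpha^{\infty}_{r}$ of Example~\ref{aJ}.

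Two details in your write-up need minor repair. First, the order of constants in the gluing (``choose $B$ large, then $A$ large'') is consistent only if $\omega$ vanishes identically on a neighbourhood of $\clos V$; with $M_1$ this requires $|x-x_0|^2-r^2\le -1$ there, so you must either enlarge $r$ or take the regularization parameter $\eta$ smaller than $\inf_{\clos V}\bigl(r^2-|x-x_0|^2\bigr)$ --- otherwise increasing $A$ destroys the bound $g\le\inf_{\clos V}u_t-1$ already arranged on $\clos V$. Second, dismissing only the case $u\equiv\boldsymbol{-\infty}$ is not quite enough: $O$ may be disconnected, and $u$ may equal $\boldsymbol{-\infty}$ on one component meeting $\clos V'$ while being finite on another, in which case $u_t$ is not a smooth function there and your mollification step breaks down. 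The standard patch is to prove the inequality first for $u_N:=\max\{u,-N\}\in\sbh(O)$, which is locally bounded so your argument applies verbatim, and then let $N\to\infty$ by a second application of monotone convergence (alternatively, note that testing with locally constant functions, which lie in $\sbh(O)\cap C^{\infty}(O)$, gives $\vartheta(D_j)=\mu(D_j)$ on each component $D_j$, so components where $u\equiv\boldsymbol{-\infty}$ contribute $-\infty$ to both sides or to neither). With these repairs your proof is complete.
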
 
Theorem \ref{pr:diff} follows from \cite[Ch. 4, 10, Approximation Theorem]{Doob}.

\begin{example}[{\rm \cite{Gamelin}, \cite{C-R}, \cite{C-RJ}, \cite{Schachermayer}}]\label{sbhJ} Let $x\in O$. 
If a measure $\mu\in \Meas_{\comp}^+(O)$  is  a balayage of the Dirac measure $\delta_{x}$ for $\sbh(O)$, then this measure $\mu$ is called a {\it Jensen measure for\/} $x$. The class of such measures is denoted by $J_x(O)$. 
\end{example}
\begin{example}\label{sbhom} 
 We denote by   $\omega_D\colon D\times{\Borel}(\partial D)\to [0,1]$ the \textit{harmonic measure for\/} $D$ with non-polar boundary $\partial D\subset \RR_{\infty}^d$. Measures $\omega_D(x,\cdot)\in \Meas^+(\partial D)$, $x\in D$, will also be called a harmonic measure  for $D$, but with specification,  \textit{at\/} $x\in D$.  If $D\Subset O$, then measures 
\begin{equation}\label{om}
a\delta_x+b\omega_D(x,\cdot)\in J_x(O), \quad a,b\in \RR^+, \quad a+b=1.
\end{equation}
Likewise, if \begin{equation*}
\sum_{k\in \NN}b_k=1,\quad  b_k \in \RR^+, \quad D_k, D:=\bigcup_kD_k\Subset O
\quad\text{ are Greenian, then $\sum_k b_k\omega_{D_k}(x,\cdot)\in J_x(O)$.}  
\end{equation*}

So, the surface measure  $\sigma_{d-1}$ in the unit sphere $\partial \BB$ belong to  $J_0(r\DD)$ for any $r>1$. 
\end{example}
\begin{example}\label{aJ} 
Useful examples of Jensen measures from $J_0(B(r))$ are \textit{probability\/} measures
\begin{equation}\label{eps}
 \alpha_r^{\infty}\overset{\eqref{MLl}}{\in} \left(C_0^{\infty}(r\BB)\right)^+\dd \lambda_d\in \Meas^{1+}\bigl(B(r)\bigr),\quad \alpha_r^{\infty}(S)=\alpha_1^{\infty}( S/r), \quad S\in \Borel(\RR^d), 
\end{equation}
$ r\in \RR_*^+$,
invariant under the action of the orthogonal group on $\RR^d$.
\end{example}
\begin{example}[{\rm \cite{Gamelin}, \cite{Kha03}}]\label{sbhAS} Let $x\in O$.   If $\mu\in \Meas_{\comp}^+(O)$ is a balayage of $\delta_{x}$ for $\Har(O)$, then such measure $\mu$ is called a {\it Arens\,--\,Singer  measure for\/} $x$. The class of such measures is denoted by $AS_x(O)\supset J_x(O)$. Arens\,--\,Singer measures are often referred to as representing measures.
\end{example}

\begin{theorem}\label{pr:ii} For  $H\overset{\eqref{hHS}}{\subset} \sbh(O)$ (resp.,
$H =-H\subset \har(O)$), let $\mu\in \Meas_{\comp}(O)$ be a  balayage of $\vartheta\in \Meas_{\comp}$ for $H$. Let $\iota_x\in J_x(O)\subset \Meas^{1+}_{\comp}(O) $ 
(resp., $\iota_x\in AS_x(O)\subset \Meas^{1+}_{\comp}(O) $)
with 
\begin{equation}\label{sxi}
s_x:=\supp \iota_x\Subset B\Bigl (x,\frac{1}{2}\dist (\supp \mu, \partial O)\Bigr) \quad\text{for all $x\in \supp \mu$}
\end{equation} 
be a family Jensen  (resp., Arens\,--\,Singer) measures for points $x\in O$. The measure $\mu$  and probability measures $\iota_x$ are bounded  in aggregate, and we can to define the integral  of $\iota_x$ with respect to $\mu$ {\rm \cite{Landkoff}, \cite{Bourbaki}}
\begin{equation}\label{iimu}
\beta:=\int \iota_x\dd\mu(x)\colon h\longmapsto \int\Bigl(\int h\dd\iota_x\Bigr ) \dd \mu(x)
\end{equation}   
In particular, if every Jensen (resp., Arens\,--\,Singer) measure $\iota_x$ is a parallel shift to a point $x\in O$ of the same Jensen (resp., Arens\,--\,Singer) measure $\iota_0 $ for $0$ with the diameter $\diam \supp \iota_0$ of $\supp \iota_0$ fewer than
$\frac{1}{2}\dist (\supp \mu, \partial O)\bigr)$, then  integral $\beta$ from \eqref{iimu} 
is a classical  convolution $\beta =\iota_0*\mu$ of two measures $\iota_0$ and $\mu$:
\begin{equation}\label{sv}
\beta:=\iota_0 \ast \mu =\mu\ast \iota_0 \colon h\longmapsto \iint h (x+y) \dd \iota_0 \dd \mu,
\quad s_x\overset{\eqref{sxi}}{=}x+\supp \iota_0.
\end{equation}
In these cases both  measures $\beta$ from \eqref{iimu}--\eqref{sv} also a balayage of $\vartheta$ for $H$  with 
\begin{equation*}
\supp \beta\subset  \clos \Bigl((\supp \mu) \bigcup \bigcup_{x\in \supp \mu}s_x\Bigr)
\Subset O. 
\end{equation*}
\end{theorem}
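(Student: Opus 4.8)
The plan is to reduce the assertion to a single pointwise inequality supplied by the defining property of Jensen (resp.\ Arens\,--\,Singer) measures. For $h\in H$ set
\[
\tilde h(x):=\int h\dd\iota_x, \qquad x\in\supp\mu .
\]
First I would record that $\beta$ from \eqref{iimu} is a well-defined charge: since each $\iota_x\in\Meas^{1+}_{\comp}(O)$ is a probability measure, $|\beta|(O)\le\int\iota_x(O)\dd|\mu|(x)=|\mu|(O)<\infty$, which is exactly the ``bounded in aggregate'' remark and guarantees convergence of the vector integral \eqref{iimu}. By the very definition \eqref{iimu} of $\beta$, together with a Fubini\,--\,Tonelli interchange identifying the functional $h\mapsto\int(\int h\dd\iota_x)\dd\mu(x)$ with integration against the measure $A\mapsto\int\iota_x(A)\dd\mu(x)$, one obtains
\[
\int h\dd\beta=\int\tilde h\dd\mu .
\]
Thus the claim ${\vartheta}\preceq_H\beta$ becomes the inequality $\int h\dd\vartheta\le\int\tilde h\dd\mu$ for every $h\in H$.

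The key step is the comparison between $\tilde h$ and $h$. Because $\iota_x\in J_x(O)$ sweeps $\delta_x$ for $\sbh(O)$, every $h\in H\subset\sbh(O)$ satisfies $h(x)=\int h\dd\delta_x\le\int h\dd\iota_x=\tilde h(x)$; in the Arens\,--\,Singer situation $\iota_x\in AS_x(O)$ sweeps $\delta_x$ for the self-symmetric class $\har(O)=-\har(O)$, whence $\tilde h(x)=h(x)$ for $h\in H\subset\har(O)$. Assembling the chain, in the harmonic case
\[
\int h\dd\beta=\int\tilde h\dd\mu=\int h\dd\mu\overset{\eqref{balnumu}}{\ge}\int h\dd\vartheta ,
\]
which requires no sign hypothesis on $\mu$. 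In the subharmonic case the integrals in \eqref{balnumu} are well defined (in $\RR_{-\infty}$) precisely for positive $\mu$ (cf.\ the text after \eqref{hHS}), and positivity of $\mu$ promotes the pointwise inequality $\tilde h\ge h$ to $\int\tilde h\dd\mu\ge\int h\dd\mu$; the same chain then yields $\int h\dd\beta\ge\int h\dd\vartheta$. This proves ${\vartheta}\preceq_H\beta$.

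For the support, fix a Borel set $A$ disjoint from $K:=\clos\bigl((\supp\mu)\cup\bigcup_{x\in\supp\mu}s_x\bigr)$. Then $A\cap s_x=\varnothing$, so $\iota_x(A)=0$, for every $x\in\supp\mu$; hence $\beta(A)=\int\iota_x(A)\dd\mu(x)=0$, and therefore $\supp\beta\subset K$. To see $K\Subset O$, put $d_0:=\dist(\supp\mu,\partial O)>0$. Condition \eqref{sxi} places every $s_x$ inside $B\bigl(x,\tfrac12 d_0\bigr)$, so each point of $\bigcup_x s_x$ lies within distance $\tfrac12 d_0$ of the compact set $\supp\mu$; consequently $K$ is contained in the closed $\tfrac12 d_0$-neighborhood of $\supp\mu$, which is compact and at distance $\ge\tfrac12 d_0$ from $\partial O$, i.e.\ $K\Subset O$.

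Finally, in the translation-invariant case $\iota_x(\,\cdot\,)=\iota_0(\,\cdot-x)$ one has $\tilde h(x)=\int h(x+y)\dd\iota_0(y)$, which is manifestly $\mu$-measurable (indeed subharmonic, being an average of translates of $h$), and $\beta(A)=\int\iota_0(A-x)\dd\mu(x)=(\iota_0*\mu)(A)$ recovers \eqref{sv}. I expect the main obstacle to be precisely this measurability together with the legitimacy of the Fubini\,--\,Tonelli interchange when $h$ attains the value $-\infty$: one must verify that $x\mapsto\tilde h(x)$ is Borel and $\mu$-integrable before the interchange may be performed. In the general (non-convolution) case this rests on the measurability of the family $x\mapsto\iota_x$ (that $x\mapsto\iota_x(A)$ is Borel for each Borel $A$); granting it, the interchange is justified by applying Tonelli to $C-h\ge 0$, where $h\le C$ on the compact set $K$, against the positive measures $\iota_x$ and $|\mu|$ (in the harmonic case $h$ is finite and ordinary Fubini with the finite charge $\mu$ suffices).
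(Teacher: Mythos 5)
Your proposal is correct and follows essentially the same route as the paper's own proof: the pointwise sweep inequality $h(x)\le\int h\dd\iota_x$ (equality for Arens--Singer measures applied to $\pm h\in\har(O)$) integrated against $\mu$, identified with $\int h\dd\beta$ via Fubini, exactly as in the paper's chain \eqref{ins}. You additionally spell out details the paper leaves implicit --- well-definedness of $\beta$, the support inclusion, the convolution identification, and notably the fact that the subharmonic case needs $\mu\geq 0$ (or at least well-defined integrals as discussed after \eqref{hHS}) to integrate the pointwise inequality, while the harmonic case works for signed $\mu$ --- all of which is consistent with the paper's setup.
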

\begin{proof} Under condition \eqref{sxi}, for subharmonic function $h\in H\subset \sbh(O)$, we have
\begin{equation}\label{ins}
\int h\dd \vartheta \leq \int h\dd \mu \leq 
\int_{\supp \mu }\Bigl(\int_{s_x}h \dd \iota_x\Bigr)\dd \mu (x)
\overset{\eqref{iimu}}{=}\int h \dd \beta 
\end{equation}
by definitions  \eqref{iimu}--\eqref{sv}. For $H\subset \har (O)$ and $\iota_x\in AS_x$, by analogy with \eqref{ins}, we have equalities in \eqref{ins}.
\end{proof}
\begin{remark}\label{remgl}
If we choose parallel shifts to $x$ of measures (Example \ref{aJ}, \eqref{eps}) 
$$\alpha_{r(x)}^{\infty}
\in \left(C_0^{\infty}(r(x)\BB)\right)^+\dd \lambda_d\in \Meas^{1+}B(r(x))
$$ as measures $\iota_x$ for  Theorem  \ref{pr:ii} with a function $r\in C^{\infty}(O)$ and with condition  \eqref{sxi}, then our measures $\beta$ from  from \eqref{iimu}--\eqref{sv} both measures belong to the class $C_0^{\infty}(O)\dd \lambda_d$ and still $\vartheta \preceq_{H} \beta $, i.e., 
the measure $\beta \in \Meas_{\comp}(O) $ is a balayage of the measure $\vartheta$ for $H$.
\end{remark}

\begin{theorem}\label{Pr:munuh}  Let  $\mu \in \Meas_{\comp}(O)$ be a  balayage of ${\vartheta}\in  \Meas_{\comp}(O)$ for $\Har (O)$.  Then 
\begin{equation}\label{bh}
\int h \dd {\vartheta}=\int h\dd \mu \quad \text{for any $h \in  \Har \bigl({\inhull_O (\supp \mu \cup \supp {\vartheta}})\bigr)$}
\end{equation}
{\rm (see Subsec. \ref{hullin}, Definition \ref{df:hole} 
of inward-filled hull of compact subset $\supp \mu \cup \supp {\vartheta}$
 in $O$).}
\end{theorem}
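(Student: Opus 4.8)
The plan is to reduce \eqref{bh} to the case of test functions harmonic on all of $O$, where equality follows immediately from the hypothesis, and to bridge the gap by a Runge-type harmonic approximation theorem whose topological hypothesis is supplied by Theorem \ref{KOc}.

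First I would record the elementary reduction. Put $K:=\supp\mu\cup\supp\vartheta$ and $\widehat K:=\inhull_O K$; by Theorem \ref{KOc}(\ref{Ki}) this is a compact subset of $O$ with $\supp\mu,\supp\vartheta\subset K\subset\widehat K\Subset O$. Because $\Har(O)$ is a real vector space invariant under $h\mapsto-h$, applying the defining balayage inequality \eqref{balnumu} to both $h$ and $-h$ yields $\int h\dd\vartheta=\int h\dd\mu$ for every $h\in\Har(O)$ (the same mechanism already used in Theorem \ref{Prtr}, item \ref{b2}). Thus \eqref{bh} holds whenever the harmonic function extends to all of $O$.

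Next comes the substance. Given an arbitrary $h\in\Har\bigl(\widehat K\bigr)$, i.e. a function harmonic on some open neighborhood of $\widehat K$, the point is that $h$ can be approximated uniformly on $\widehat K$ by functions from $\Har(O)$. The necessary and sufficient topological condition for such relative harmonic (Runge-type) approximation is that $O_{\infty}\setminus\widehat K$ be connected, which is precisely the content of Theorem \ref{KOc}(\ref{Kii})---indeed the very purpose of passing from $K$ to its inward-filled hull was to fill in every $O$-precompact hole. Invoking the harmonic approximation theorem in this relative form \cite{Gardiner}, \cite{Gauther_B}, I would obtain $h_n\in\Har(O)$ with $\sup_{\widehat K}|h_n-h|\to0$, hence with uniform convergence on $\supp\mu\cup\supp\vartheta\subset\widehat K$.

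Finally I would let $n\to\infty$. Since $\mu,\vartheta\in\Meas_{\comp}(O)$ have finite total variation on the compact set $\widehat K$, uniform convergence on the supports gives $\bigl|\int h_n\dd\vartheta-\int h\dd\vartheta\bigr|\le\bigl(\sup_{\widehat K}|h_n-h|\bigr)\,|\vartheta|(\widehat K)\to0$ and likewise for $\mu$; combining this with the equalities $\int h_n\dd\vartheta=\int h_n\dd\mu$ from the first step produces \eqref{bh} in the limit. I expect the main obstacle to be identifying and correctly applying the relative harmonic approximation theorem with the exact connectivity hypothesis; once Theorem \ref{KOc}(\ref{Kii}) is recognized as furnishing that hypothesis for $\widehat K$, the remaining steps are routine.
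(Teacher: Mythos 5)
Your proposal is correct and essentially identical to the paper's own argument: the paper likewise sets $K:=\supp\vartheta\cup\supp\mu$, invokes Theorem \ref{KOc} together with Gardiner's harmonic approximation theorem \cite[Theorem 1.7]{Gardiner} to produce $h_k\in\Har(O)$ converging to $h$ uniformly on $\inhull_O K$, and passes to the limit in the balayage inequality, handling the two directions via $\pm h$. Your only (immaterial) variation is to apply the $\pm$ symmetry to each approximant $h_n$ first, obtaining equality before taking the limit, rather than deriving one inequality and then repeating with $-h$.
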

\begin{proof}  We set 
\begin{equation}\label{K}
K:=\supp \vartheta \cup \supp \mu \Subset O.
\end{equation} 
By Theorem \ref{KOc} and \cite[Theorem 1.7]{Gardiner}, if $h\in \Har\bigl({\inhull_O K}\bigr)$, then there are functions $h_k\in \Har (O)$, $k\in \NN$, such that the sequence 
 $(h_k)_{k\in \NN}$ converges to $h$ in $C \bigl( {\inhull_O K}\bigr)$, and 
\begin{multline*}
\int_{{\inhull_O K}} h \dd {\vartheta}= \int_{{\inhull_O K}}  \lim_{k\to \infty}
h_k \dd {\vartheta} = \lim_{k\to \infty} \int_{O} h_k \dd {\vartheta}
\\
\overset{\eqref{balnumu}}{\leq} 
\lim_{k\to \infty} \int_{O} h_k \dd \mu= 
 \int_{{\inhull_O K}} \lim_{k\to \infty} h_k \dd \mu=
 \int_{{\inhull_O K}} h \dd {\vartheta}.
\end{multline*}
Using the opposite function $-h\in  \Har\bigl({\inhull_O K}\bigr)$, we have the inverse inequality.  
\end{proof}

\begin{theorem}\label{pr:4}  Let  $\mu \in \Meas_{\comp}(O)$ be a  balayage of ${\vartheta}\in  \Meas_{\comp}(O)$ for $\sbh (O)$.  Then 
\begin{equation}\label{bhs}
\int  u \dd {\vartheta}\leq \int  u\dd \mu \quad \text{ for all $u\in \sbh \bigl({\inhull_O (\supp \mu \cup \supp {\vartheta})}\bigr)$},
\end{equation}
i.\,e.,  if $O'\supset\inhull_O (\supp \mu \cup \supp {\vartheta})$ is an open set, then $\mu$ is a    $\sbh(O')$-balayage of $\vartheta$. 
\end{theorem}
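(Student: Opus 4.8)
The plan is to reduce this to the already-established harmonic case (Theorem \ref{pr:munuh}) by means of the Riesz decomposition, so that the geometry of the inward-filled hull enters only through that theorem and we never need a subharmonic approximation theorem. Set $K:=\supp\vartheta\cup\supp\mu\Subset O$ and $L:=\inhull_O K$, which is a compact subset of $O$ by Theorem \ref{KOc}, and note $K\subset L$. Let $u\in\sbh(L)$; by the definition of $\sbh$ on a set, $u$ is subharmonic on some open $W$ with $L\subset W$. First I would choose an open set $W'$ with $L\Subset W'\Subset W$, which is possible since $L$ is compact and $W$ open.

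Next I would split off the Riesz mass. Put $\nu:=\varDelta_u\bigm|_{W'}$; this is a positive measure carried by $W'$, with $\supp\nu\subset\clos W'\Subset W$ compact and finite total mass $\nu(\RR^d)=\varDelta_u(W')<\infty$ (see the remark after \eqref{df:cm}). Let $p$ be the Riesz potential of $\nu$ (Newtonian for $d\geq3$, logarithmic for $d=2$), normalized so that $\varDelta_p=\nu$ on $\RR^d$; then $p\in\sbh_*(\RR^d)$, and hence its restriction satisfies $p\bigm|_O\in\sbh(O)$. Since $\varDelta$ is linear and $\nu$ is carried by $W'$, on the open set $W'$ we have $\varDelta_{u-p}=\varDelta_u\bigm|_{W'}-\nu=0$, so $h:=u-p$ is harmonic on $W'$. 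Because $W'$ is an open neighborhood of $L=\inhull_O K$, this gives $h\in\Har(\inhull_O K)$, and the pointwise identity $u=p+h$ holds throughout $W'\supset K$.

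Now I combine the two ingredients. Because $p\bigm|_O\in\sbh(O)$, the balayage hypothesis \eqref{balnumu} yields $\int p\dd\vartheta\leq\int p\dd\mu$. Because $h$ is harmonic on the neighborhood $W'$ of $\inhull_O K$, Theorem \ref{pr:munuh} yields $\int h\dd\vartheta=\int h\dd\mu$. The function $h$ is continuous, hence bounded on the compact set $K$ carrying $\vartheta$ and $\mu$, so $\int h\dd\vartheta$ and $\int h\dd\mu$ are finite real numbers; therefore the integral of $u=p+h$ against each charge splits additively with no $\infty-\infty$ ambiguity, and adding the two relations gives
\begin{equation*}
\int u\dd\vartheta=\int p\dd\vartheta+\int h\dd\vartheta\leq\int p\dd\mu+\int h\dd\mu=\int u\dd\mu,
\end{equation*}
which is \eqref{bhs}. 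Since this holds for every $u\in\sbh(O')$ with open $O'\supset\inhull_O K$, it is exactly the assertion that $\mu$ is a $\sbh(O')$-balayage of $\vartheta$.

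The one-sidedness of the conclusion is intrinsic: for the potential part we retain only the subharmonic inequality, because $-p$ is superharmonic and thus not a legitimate test function in $\sbh(O)$, whereas for the harmonic part both $h$ and $-h$ are admissible, which is precisely why Theorem \ref{pr:munuh} upgrades to an equality. I expect the main obstacle to be bookkeeping rather than conceptual: one must make the Riesz decomposition global in the right sense, namely that $p$ is genuinely subharmonic on all of $O$ (so the hypothesis applies to it) while the remainder $h$ is harmonic on a full open neighborhood of $\inhull_O K$ (so Theorem \ref{pr:munuh} applies), and one must confirm that $\vartheta$ and $\mu$ integrate the test functions in the sense of Definition \ref{df:1}, so that the additive splitting of the integrals over $K$ is valid.
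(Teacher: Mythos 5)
Your proof is correct, but it takes a genuinely different route from the paper's. The paper proves Theorem \ref{pr:4} in one step via a \emph{subharmonic extension} theorem: since $L:=\inhull_O(\supp\vartheta\cup\supp\mu)$ enjoys the topological properties of Theorem \ref{KOc}, every $u\in\sbh(L)$ agrees on $L$ with some globally defined $U\in\sbh(O)$ (Gardiner, Theorem 6.1; Gauthier), and applying the hypothesis \eqref{balnumu} to $U$ gives \eqref{bhs} immediately. You instead use the classical Riesz decomposition $u=p+h$ on a neighborhood $W'$ of $L$, feeding the globally subharmonic potential $p$ directly into the balayage hypothesis and the harmonic remainder $h$ into Theorem \ref{Pr:munuh}; thus the geometry of the hull enters your argument only through the harmonic approximation behind Theorem \ref{Pr:munuh} (Gardiner, Theorem 1.7), whereas the paper invokes the stronger and less standard subharmonic extension result. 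What your route buys: it relies only on classical potential theory plus a theorem already established in the paper, and it transparently explains why the conclusion is a one-sided inequality (the potential part admits only $p$, not $-p$, as a test function) while the harmonic part upgrades to equality. What the paper's route buys: a shorter proof with a global subharmonic representative $U$ and none of the bookkeeping about splitting possibly infinite integrals, which you handle correctly via the boundedness of $h$ on the compact carrier. Two points you should make explicit. First, Theorem \ref{Pr:munuh} is stated for a $\Har(O)$-balayage, so you must note that your $\sbh(O)$-balayage hypothesis implies it --- immediate, since $\pm h\in\sbh(O)$ for every $h\in\har(O)$ (cf.\ Theorem \ref{Prtr}, statement \ref{b3}). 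Second, the Riesz decomposition requires $u\not\equiv-\infty$ on the components of $W'$ meeting $\supp\vartheta\cup\supp\mu$ (the function $\boldsymbol{-\infty}$ belongs to $\sbh$); this edge case is glossed over by the paper's extension argument as well, so it does not disadvantage your proof, but it deserves a remark.
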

\begin{proof} We use  the notation \eqref{K}. By Theorem \ref{KOc},  if $ u\in \sbh\bigl({\inhull_O K}\bigr)$, then there is  a function $ U \in \sbh \bigl(O)$ such that $u=U$ on  $ {\inhull_O K}$ \cite[Theorem 6.1]{Gardiner},
\cite[Theorem 1]{Gauther_C}, \cite[Theorem 16]{Gauther_B}, and we have
\begin{equation*}
\int_{{\inhull_O K}}  u \dd {\vartheta}= \int_{{\inhull_O K}}  
 u \dd {\vartheta} = \int_{O} U \dd {\vartheta}
\overset{\eqref{balnumu}}{\leq} 
 \int_{O} U \dd \mu= 
 \int_{{\inhull_O K}} U \dd \mu=
 \int_{{\inhull_O K}}  u \dd {\mu},
\end{equation*}
that   gives \eqref{bhs}.
\end{proof}

\begin{theorem}\label{Pr_pol}
If  $\mu\in \Meas_{\comp}^+(O)$ is  a\/  $\sbh(O)$-balayage of a measure ${\vartheta} \in \Meas_{\comp}^+(O)$, and  a set $E$ is polar, then  $\mu (O\cap E\setminus \supp {\vartheta})=0$.
\end{theorem}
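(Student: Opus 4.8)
The plan is to reduce the statement to compact polar sets lying at positive distance from $\supp\vartheta$, and then to exhibit a single subharmonic test function on which the balayage inequality \eqref{balnumu} forces the desired vanishing.

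First I would perform the reduction. Write $A:=O\cap E\setminus \supp\vartheta$. Since $\mu\in \Meas_{\comp}^+(O)$ is a finite positive Radon measure carried by the compact set $\supp\mu$, we have $\mu(A)=\mu(A\cap \supp\mu)$, so it is enough to control $\mu$ on the bounded polar set $A\cap\supp\mu$, which is disjoint from $\supp\vartheta$. Replacing $E$ by a Borel (indeed $G_\delta$) polar hull only enlarges the set and does not affect the claim, so we may assume $A\cap\supp\mu$ is Borel; by inner regularity of $\mu$, $\mu(A\cap\supp\mu)=\sup\{\mu(C)\colon C\subset A\cap\supp\mu,\ C\text{ compact}\}$. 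Each competitor $C$ is a compact polar set with $C\cap\supp\vartheta=\varnothing$, hence $\delta:=\dist(C,\supp\vartheta)>0$. Thus it suffices to prove $\mu(C)=0$ for every compact polar $C$ with $\dist(C,\supp\vartheta)>0$.

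Next, for such a $C$ I would invoke Evans's theorem: since $C$ is compact and polar and $O$ is Greenian, there is a positive measure $\rho$ with $\supp\rho\subset C$ whose Green potential $p:=\int G_O(\cdot,y)\dd\rho(y)$ satisfies $p\equiv +\infty$ on $C$. This $p$ is nonnegative and superharmonic on $O$, and, being harmonic off $\supp\rho\subset C$, it is finite and bounded on a neighborhood of $\supp\vartheta$ (which meets $C$ nowhere, as $\dist(C,\supp\vartheta)>0$). Hence $u:=-p\in \sbh(O)$ is an admissible test function with $u\le 0$, $u=-\infty$ on $C$, and $u$ bounded on $\supp\vartheta$. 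Feeding $u$ into \eqref{balnumu} gives $\int u\dd\vartheta\le \int u\dd\mu$: the left side equals $-\int p\dd\vartheta$, a finite real number since $p$ is bounded on $\supp\vartheta$ and $\vartheta$ is finite; on the right, $p\ge 0$ with $p=+\infty$ on $C$ yields $\int p\dd\mu\ge (+\infty)\cdot\mu(C)$, so $\mu(C)>0$ would force $\int u\dd\mu=-\infty$, contradicting the finite lower bound. Therefore $\mu(C)=0$, and the reduction gives $\mu(O\cap E\setminus\supp\vartheta)=0$.

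I expect the delicate point to be precisely the construction of the test function: one needs a subharmonic $u$ that is genuinely $-\infty$ at \emph{every} point of the polar set, yet remains finite near $\supp\vartheta$. This is exactly what the Evans potential carried by the compact polar set $C$ provides, its harmonicity off $C$ guaranteeing the required finiteness on $\supp\vartheta$. The positivity of $\vartheta$ and $\mu$ (so that the integrals are well defined in $\RR_{-\infty}$ and $\int_C p\dd\mu=(+\infty)\cdot\mu(C)$ under the convention $0\cdot(+\infty)=0$) and the Greenian hypothesis on $O$ (needed for $G_O$ to exist) are both used in an essential way.
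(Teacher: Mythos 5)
Your proof is correct, and it takes a genuinely different route from the paper's. The paper works with a single global potential $u\in\sbh_*(O)$ satisfying $u\equiv-\infty$ on $E$ and repairs its behaviour near $\supp\vartheta$ by Poisson modification: it covers $\supp\vartheta$ by finite unions of balls $O_k\Subset O$ shrinking down to $\supp\vartheta$, replaces $u$ on each regular set $O_k$ by the harmonic extension from $\partial O_k$ (the functions $U_k$ of \eqref{Uk}), and feeds $U_k\in\sbh_*(O)$ --- bounded below on $\supp\vartheta$, still equal to $-\infty$ on $E$ outside $O_k$ --- into \eqref{balnumu}; this forces $\mu$ to vanish on the part of $E$ outside $O_k$, and letting $k\to\infty$ gives the claim. (The paper's display is phrased in terms of $\mu(O_k\cap E)$, but since $U_k$ is harmonic, hence finite, on $O_k$ and equals $u=-\infty$ precisely on $E\setminus O_k$, what the computation actually controls is $\mu(E\setminus O_k)=0$ --- which is exactly the quantity your exhaustion-free argument reaches directly.) You instead reduce, via the $G_\delta$ polar hull and inner regularity of the finite Radon measure $\mu$, to a compact polar $C$ with $\dist(C,\supp\vartheta)>0$, and take as test function the negative of an Evans potential carried by $C$. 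What your route buys: you never need one global $u$ that is $-\infty$ on all of $E$ yet $\vartheta$-integrable --- Evans's theorem manufactures in one stroke a competitor with the required dichotomy ($+\infty$ on $C$, harmonic and bounded near $\supp\vartheta$) --- and you treat the measurability of $E$ explicitly, a point the paper passes over in silence. What it costs: you invoke the Green function (hence the Greenian standing hypothesis \eqref{ODj}, genuinely needed for $d=2$) and Evans's theorem, machinery heavier than the elementary harmonic extension over regular open sets that suffices for the paper. Both proofs ultimately rest on the same mechanism: a subharmonic test function equal to $-\infty$ on the relevant part of $E$ but bounded below on $\supp\vartheta$, inserted into \eqref{balnumu} together with the convention $0\cdot(\pm\infty)=0$ of \eqref{{infty}0}.
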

\begin{proof} There is $k_0\in \NN$ such that $B(x,1/k_0)\Subset O$ for all 
$x\in \supp {\vartheta}$.  For any $k\in k_0+\NN_0$ 
there exists an finite cover of $\supp {\vartheta}$ by balls $B(x_j,1/k)\Subset O$ such that the open subsets 
\begin{equation*}
O_k:=\bigcup_j B(x_j,1/k)\Subset O,\quad
 \supp {\vartheta} \Subset O_k\supset O_{k+1}, \quad k\in k_0 +\NN_0, \quad \supp {\vartheta} =\bigcap_{k\in k_0+\NN_0} O_k,  
\end{equation*}
have complements $\RR_{\infty}^d \setminus O_k$ in $\RR_{\infty}^d$ \textit{without isolated points.\/} Then 
 every open set  $O_k\Subset O$ is regular for the Dirichlet problem.
It suffices to prove that the equality $\mu (\mathcal O_k\cap E)=0$ holds for every number  
 $k\in k_0+\NN_0$. By definition of polar sets, there is a  function $u\in \sbh_*(O)$ such that $u(E)=\{-\infty\}$. Consider the functions 
\begin{equation}\label{Uk}
U_k=\begin{cases}
u \text{ \it  on $O\setminus O_k$},\\
\text{\it the harmonic extension of $u$ from $\partial O_k$ into $O_k$}\text{ on $O_k$},
\end{cases}   
\qquad k\in k_0+\NN_0.
\end{equation}
We have  $U_k\in \sbh_*(O)$, and $U_k$ is bounded below in $\supp {\vartheta} \Subset O_k$. Hence
\begin{multline*}\label{<U}
-\infty <\int_O U_k \dd {\vartheta}
\overset{\eqref{balnumu}}{\leq}
\int_O U_k \dd \mu=
\left(\int_{O\setminus (O_k\cap E)}+\int_{O_k\cap E}\right) U_k \dd \mu
\\
=\int_{O\setminus (O_k\cap E)} U_k \dd \mu+(-\infty)\cdot \mu(O_k\cap E)
\overset{\eqref{{infty}0}}{\leq} \mu(O) \sup_{\supp \mu} U_k+(-\infty)\cdot \mu(O_k\cap E).
\end{multline*}
Thus, we have  $\mu(O_k\cap E)=0$.
\end{proof}

Generally speaking, Theorem \ref{Pr_pol} is not true for $\Har(O)$-balayage.  An implicit example built in \cite[Example]{MenKha19}. 
We will indicate in Example \ref{5} one more constructive and general way of building in this direction

\begin{example}[{\rm development of one example  of T. Lyons \cite[XIB2]{Koosis}}] \label{5} Consider 
\begin{equation}\label{Eas}
O=\BB,  \quad 0<r_0<r<1,  \quad \vartheta\overset{\eqref{df:spb}}{:=}
\frac1{b_dr_0^d}\lambda_d\bigm|_{r_0\BB}, 
\quad \mu \overset{\eqref{df:spb}}{:=}
\frac1{b_dr^d}\lambda_d\bigm|_{r\BB}  
\end{equation} 
Easy to see that $\vartheta \preceq_{\sbh(\BB)} \mu$. Let $E=(e_j)_{j\in \NN}\Subset r\BB\setminus r_0\overline \BB$ be a polar countable set without limit point in $r\BB\setminus r_0\overline \BB$.  Surround each point $e_j\in E$ with a ball $B(e_j,r_j)$ of such a small radius  $r_j>0$ that the union of all these balls is contained in
$r\BB\setminus r_0\overline \BB$. Consider a measure 
\begin{align*}
\mu_E&\overset{\eqref{mB}}{:=}\mu-\frac{1}{b_dr^d}\sum_{j\in \NN} \lambda_d\bigm|_{B(e_j,r_j)}+\frac{1}{b_dr^d}\sum  \lambda_d(e_j,r_j)\delta_{e_j}\\
&\overset{\eqref{Eas}}{=}
\frac1{b_dr^d}\lambda_d\bigm|_{r\BB}-\frac{1}{b_dr^d}\sum_{j\in \NN} \lambda\bigm|_{B(e_j,r_j)}+\frac{1}{r^d}\sum_{j\in\NN} r_j^d \delta_{e_j} .
\end{align*}
By construction, the measure $\mu_E$ is $\har(\BB)$-balayage of measure $\vartheta$, 
but \begin{equation*}
\mu_E(E)=\frac{1}{r^d}\sum_{j\in \NN} r_j^d >0.
\end{equation*}
in direct contrast to Theorem \ref{Pr_pol}. 
\end{example}

\end{document}